\documentclass[]{interact}
\usepackage{latexsym}
\usepackage{anyfontsize} 
\usepackage{tikz} 
\usetikzlibrary{cd} 
\usepackage{color}
\usepackage{hyperref}
\usepackage{url}
\usepackage{breakurl}
\newcommand{\bburl}[1]{\textcolor{blue}{\url{#1}}}
\usepackage{float}
\usepackage{subcaption}
\makeatletter
\renewcommand{\fnum@figure}{\figurename\nobreakspace\thefigure}
\makeatother

\makeatletter
\newcommand{\monthyear}[1]{%
  \def\@monthyear{\uppercase{#1}}}
\newcommand{\volnumber}[1]{%
  \def\@volnumber{\uppercase{#1}}}
\makeatother

\theoremstyle{plain}
\numberwithin{equation}{section}
\newtheorem{theorem}{Theorem}[section]

\newtheorem{proposition}[theorem]{Proposition}
\newtheorem{corollary}[theorem]{Corollary}

\theoremstyle{definition}
\newtheorem{definition}[theorem]{Definition}

\theoremstyle{remark}
\newtheorem{remark}[theorem]{Remark}

\numberwithin{table}{section} 
\numberwithin{figure}{section}

\begin{document}

\monthyear{Month Year}
\volnumber{Volume, Number}
\setcounter{page}{1}

\title{The Fibonacci Rectangle Game: Two First-Move Classes and a Triangle-Induced Choice}

\author{
\name{Douglas Larsson Engholm\textsuperscript{b} and Steven J. Miller\textsuperscript{a}}
\affil{
\textsuperscript{a}Department of Mathematics and Statistics, Williams College, Williamstown, MA, USA\\
\textsuperscript{b}Independent Researcher, Sweden
}
}

\maketitle

{\bf Article type}: mathematical outreach 
\bigskip

\begin{abstract}
A square-adjoining rectangle game generates the Fibonacci numbers and the Fibonacci spiral from a simple geometric rule. If one starts from a square, the four possible first moves are all equivalent by rotation. If one starts instead from a non-square rectangle, there are still four geometric placements for the first square, but they split into exactly two equivalence classes: \emph{long-side-first} and \emph{short-side-first}. We show that both classes are governed by the same Fibonacci-type recursion with different initial conditions, and that in both cases the successive aspect ratios converge to the golden ratio $\varphi$. We then add a brief geometric remark: the Hypotenuse--Axis Intercept (HAI) construction from a right triangle produces a natural ordered seed whose outward and inward branches determine precisely those two first-move classes.
\end{abstract}

\begin{keywords}
Fibonacci numbers; golden ratio; rectangle tiling; geometric recursion; Hypotenuse--Axis Intercept (HAI); mathematical outreach
\end{keywords}

\section{Introduction}
The Fibonacci spiral is one of the best-known visual introductions to the Fibonacci numbers. A particularly effective classroom version is the square-adjoining rectangle game: place square tiles one at a time on a table, never overlap them, and require that after each move the union is still a rectangle. The second named author has played this game with students of all ages, even as young as kindergarten. Using a set of tiles, students quickly discover that in the original game one cannot proceed beyond a single tile, and that adding a second $1 \times 1$ square produces the sequence of side lengths $2,3,5,8,13$. After writing these on the board, students often predict subsequent terms and uncover the recursive rule themselves. These explorations also lead naturally to interesting relations among Fibonacci numbers; see \cite{CheighFiboDigits}. For outreach-oriented presentations and classroom materials, see \cite{MillerVideo}. For general background on Fibonacci and Lucas numbers, see \cite{Koshy}.

The standard version begins with square tiles of side lengths $1,2,3,4,\dots$, one of each. If one places the $1\times1$ square first, there is no second $1\times1$ square available to keep the shape rectangular, and the game stops immediately. The cheapest way to make the game nontrivial is therefore to add one extra $1\times1$ square. Then the two $1\times1$ squares form a $1\times2$ rectangle, the $2\times2$ square fits next, then the $3\times3$ square, and so on. See Figure~\ref{fig:forced-steps}.

The point of this note is that there is a clean way to generalize the \emph{first move}. Starting from a square, four placements are possible but all are equivalent. Starting from a non-square rectangle, four placements are possible, but now they fall into two genuinely different classes. Those two classes are the main story here. The right-triangle construction enters only at the end: it is a short geometric mechanism that determines the initial class. For further details on the HAI construction, see the extended companion preprint \cite{EngholmHAI}.

\section{The original rectangle game}

As remarked in the introduction, if one begins with a single copy of each $n\times n$ square and places the $1\times1$ square first, the game is trivial: there is no second $1\times1$ square available to maintain a rectangular shape. The smallest change that makes the game nontrivial is therefore to add a second $1\times1$ square.

\begin{figure}[H]
  \centering
  \begin{subfigure}[t]{0.32\linewidth}
    \centering\vspace{0pt}
   \includegraphics[width=\linewidth,height=4.8cm,keepaspectratio]{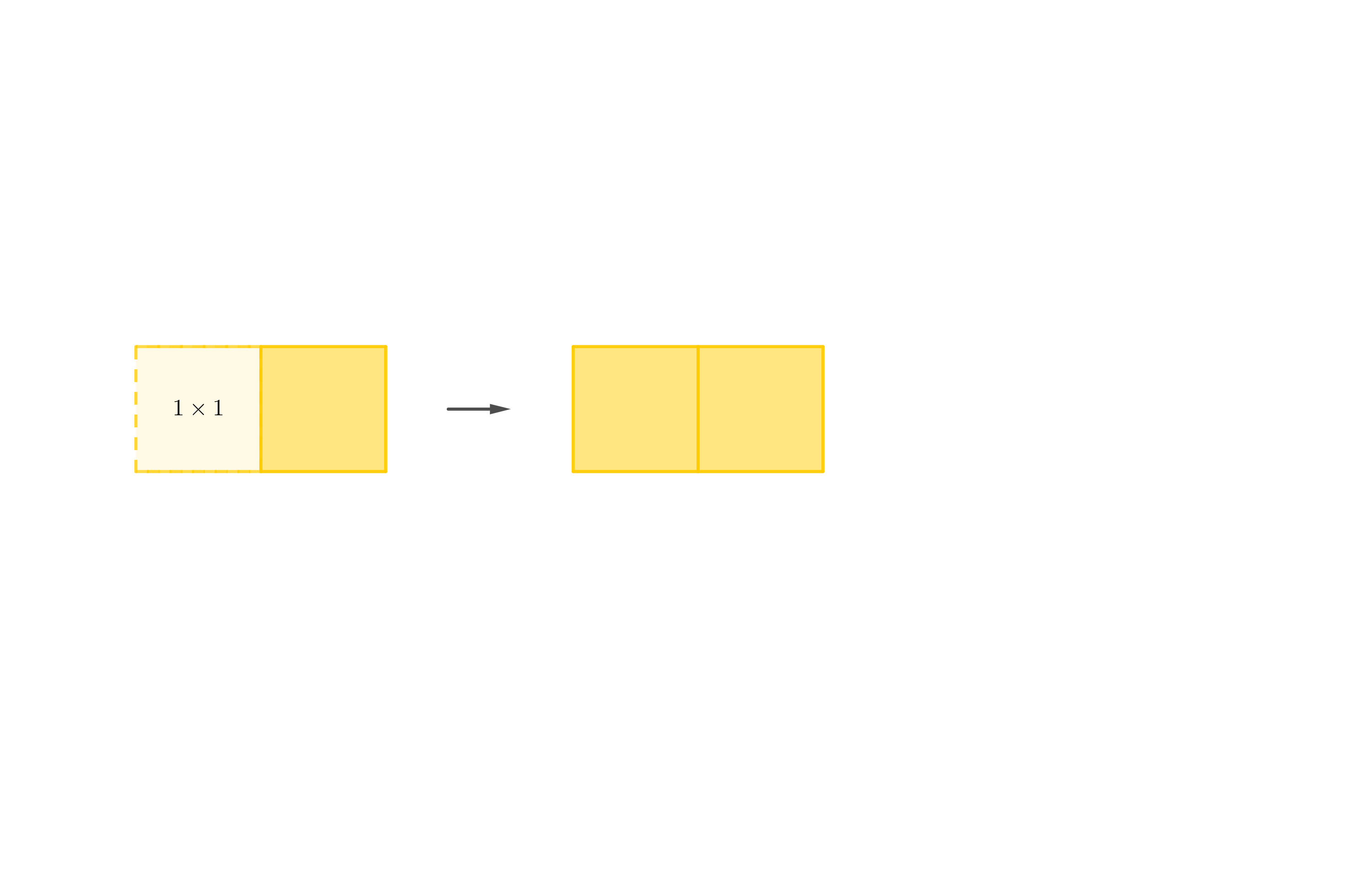}
    \caption{$1\times2$ seed.}
  \end{subfigure}\hfill
  \begin{subfigure}[t]{0.32\linewidth}
    \centering\vspace{0pt}
    \includegraphics[width=\linewidth,height=4.8cm,keepaspectratio]{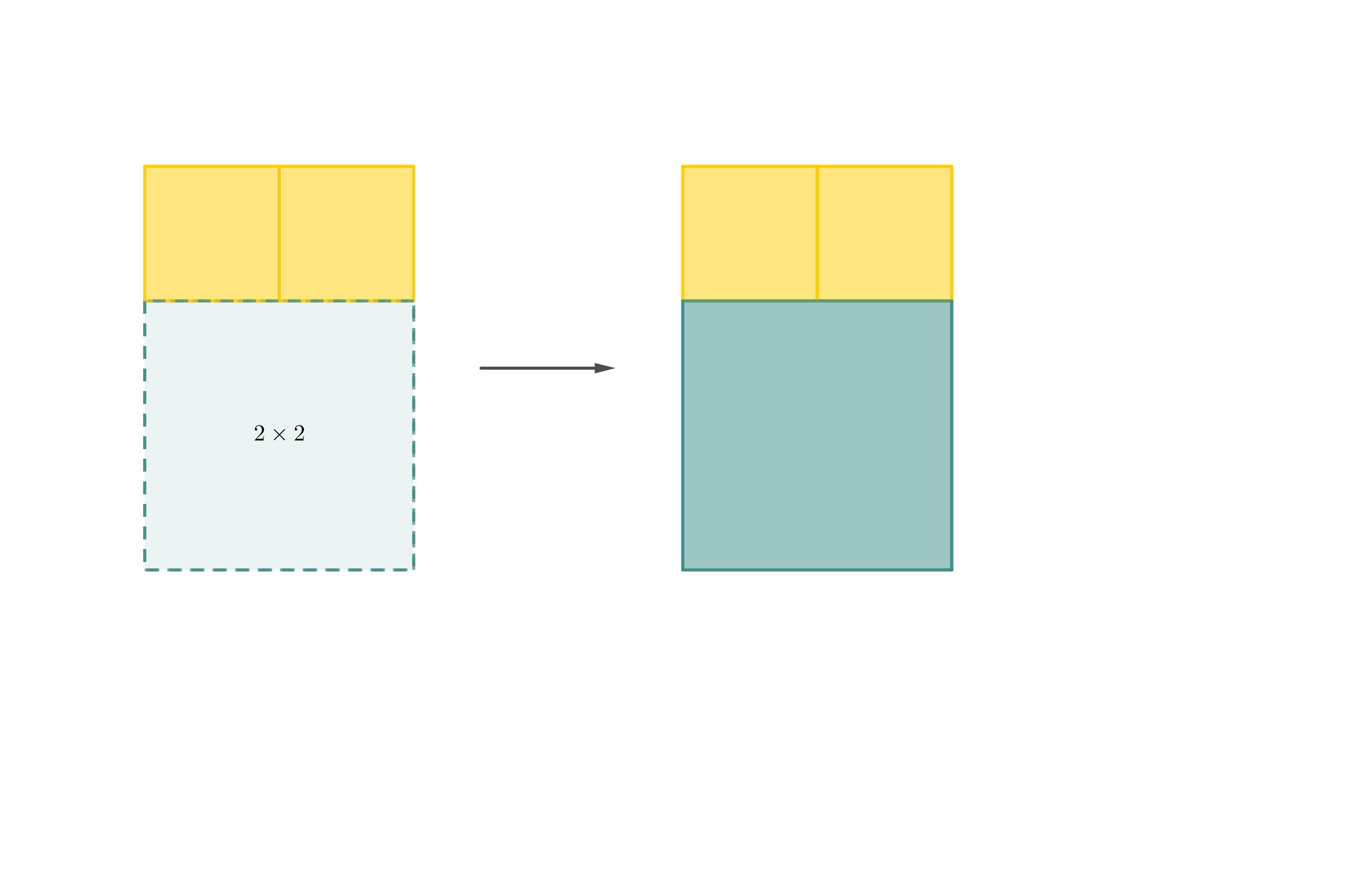}
    \caption{$2\times3$.}
  \end{subfigure}\hfill
  \begin{subfigure}[t]{0.32\linewidth}
    \centering\vspace{0pt}
    \includegraphics[width=\linewidth,height=4.8cm,keepaspectratio]{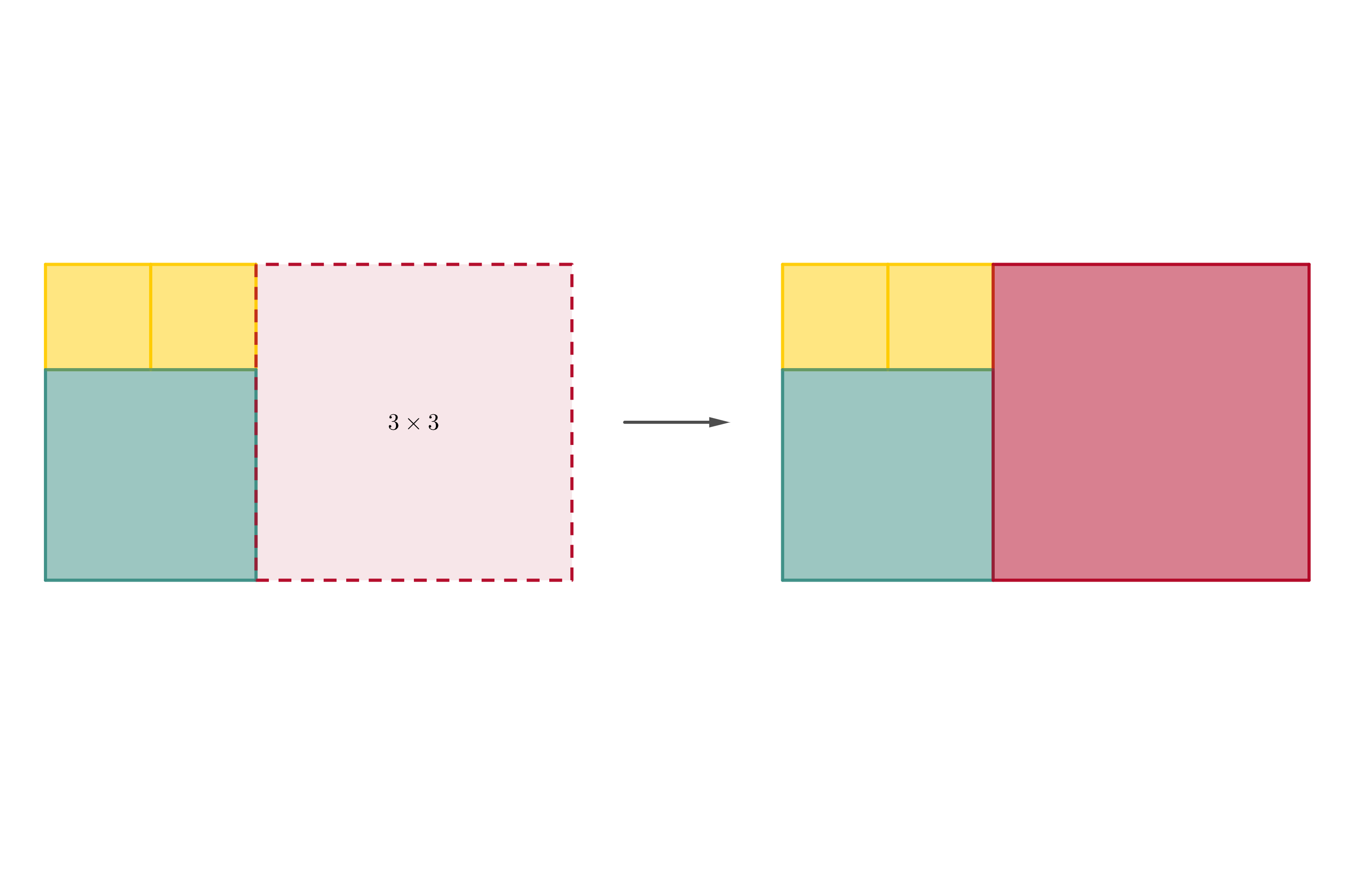}
    \caption{$3\times5$.}
  \end{subfigure}

  \vspace{2mm}

  \begin{subfigure}[t]{0.49\linewidth}
    \centering\vspace{0pt}
  \includegraphics[width=\linewidth,height=4.8cm,keepaspectratio]{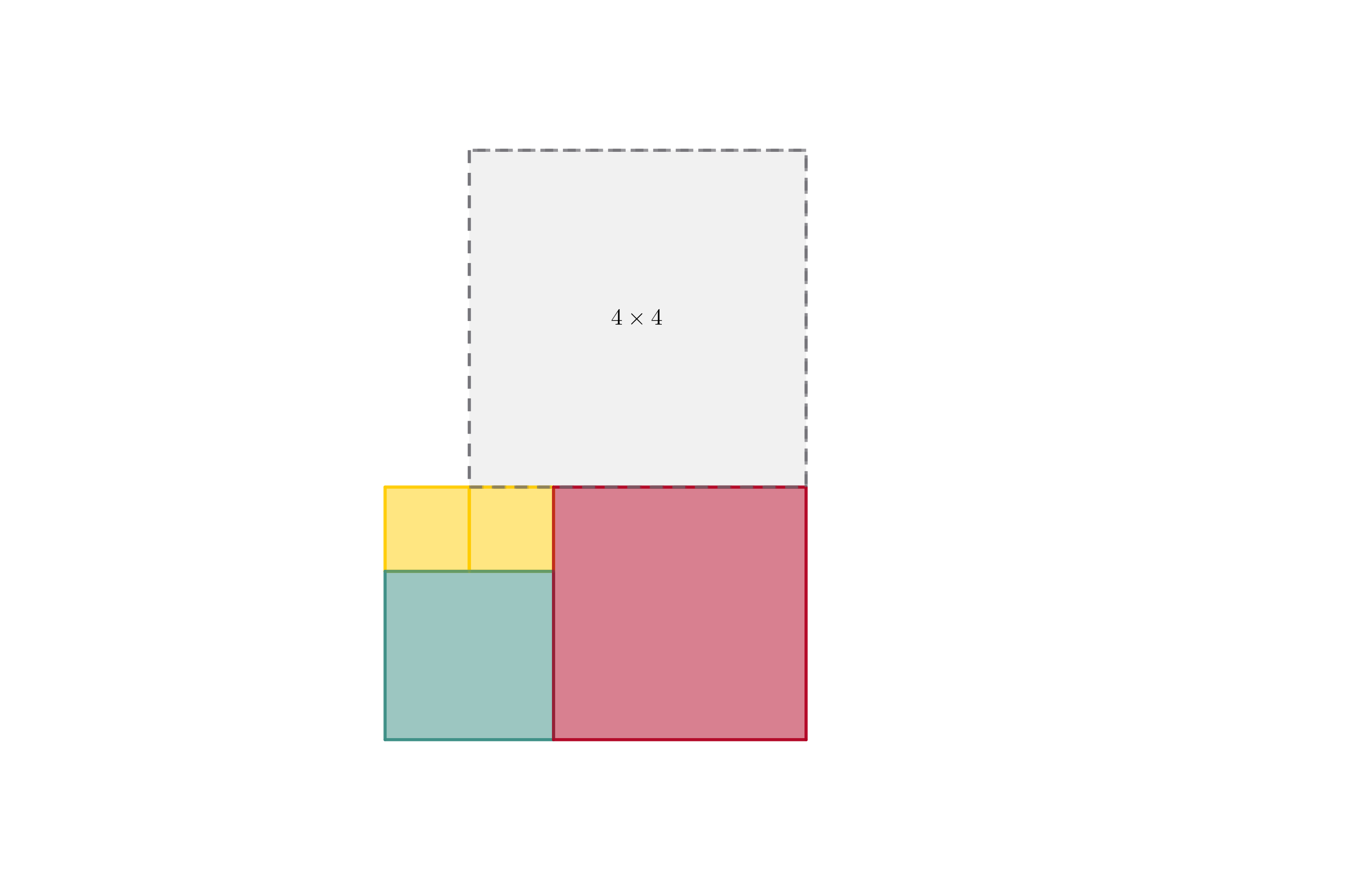}
    \caption{$4\times4$ is impossible here.}
  \end{subfigure}\hfill
  \begin{subfigure}[t]{0.49\linewidth}
    \centering\vspace{0pt}
    \includegraphics[width=\linewidth,height=4.8cm,keepaspectratio]{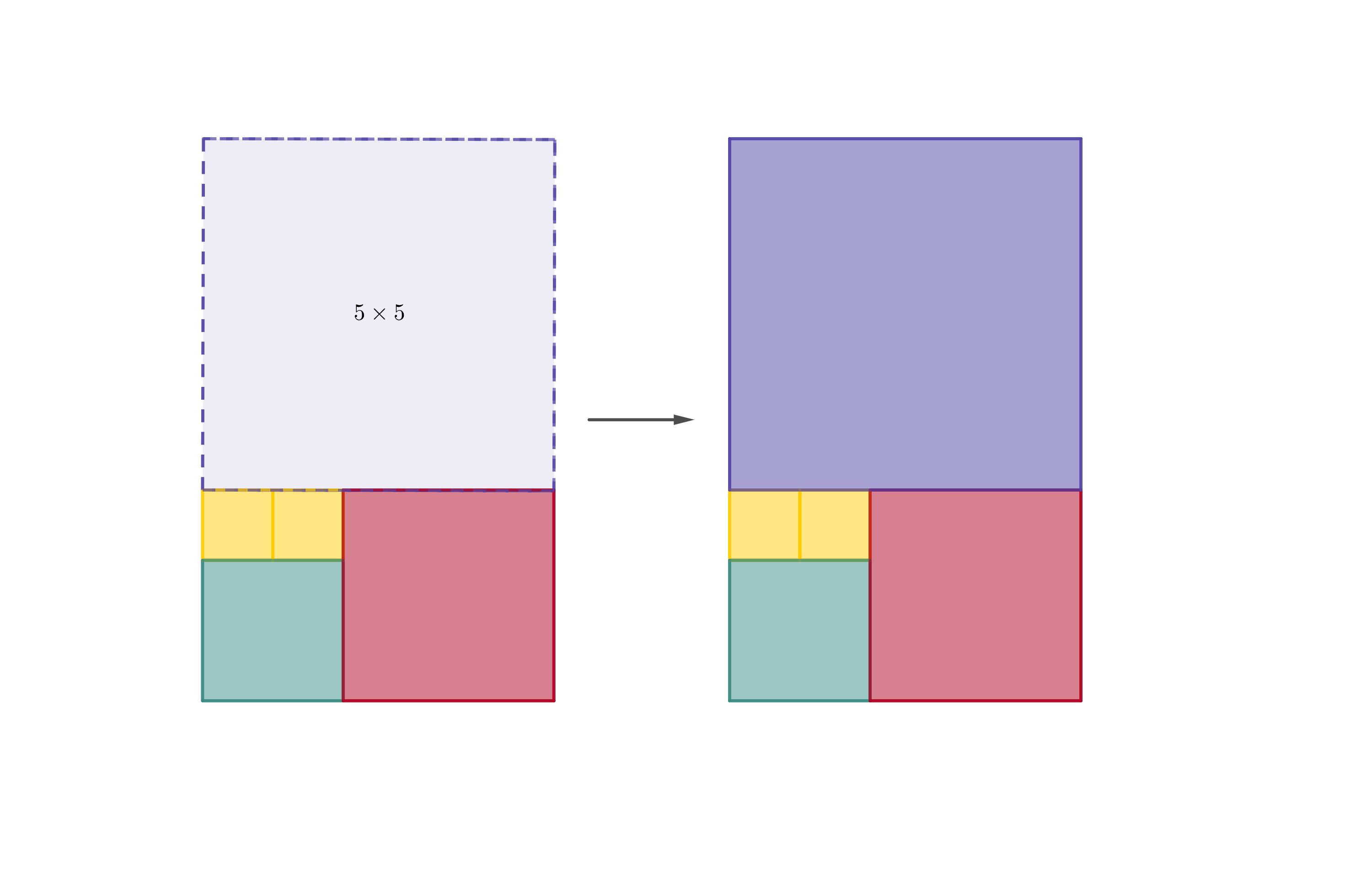}
    \caption{The next legal move is $5\times5$, giving $5\times8$.}
  \end{subfigure}

  \caption{Early forced steps in the rectangle game once a second $1\times1$ square is added.}
  \label{fig:forced-steps}
\end{figure}

\begin{definition}
At stage $n$, let the bounding rectangle have side lengths
\[
R_n \ :=\ (s_n,\ell_n), \qquad 0 \ <\ s_n \ \le\ \ell_n,
\]
where $s_n$ is the short side and $\ell_n$ is the long side.
\end{definition}

\begin{proposition}[Rectangle update]\label{prop:update}
If $R_n = (s_n,\ell_n)$ and one adjoins an $\ell_n\times\ell_n$ square along a side of length $\ell_n$, then
\[
R_{n+1} \ =\ (\ell_n,\,s_n+\ell_n).
\]
Equivalently,
\[
s_{n+1} \ =\ \ell_n,\qquad \ell_{n+1} \ =\ s_n+\ell_n.
\]
\end{proposition}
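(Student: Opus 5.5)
The plan is to set up coordinates and compute the new bounding rectangle directly. By a rotation and a reflection, which preserve side lengths and rectangularity, I may assume the current rectangle is $[0,\ell_n]\times[0,s_n]$. Its two sides of length $\ell_n$ are then the horizontal ones, and by the same symmetry I may take the adjoined square to sit on the top side. The hypothesis that the square is adjoined \emph{along} a side of length $\ell_n$ means that one full side of the square coincides with that side of the rectangle. So the square is $[0,\ell_n]\times[s_n,s_n+\ell_n]$.

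Next I will check that the move is legal and identify the union. The two pieces meet only along the segment $[0,\ell_n]\times\{s_n\}$, so their interiors are disjoint and there is no overlap. Their union is exactly $[0,\ell_n]\times[0,s_n+\ell_n]$, since every point with $0\le y\le s_n$ lies in the rectangle and every point with $s_n\le y\le s_n+\ell_n$ lies in the square. Hence the new shape is a rectangle with side lengths $\ell_n$ and $s_n+\ell_n$.

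The final step is to order the two sides, which is the only point needing care, because the definition of $R_n$ requires $s_{n+1}\le \ell_{n+1}$. Since $s_n>0$, we have $\ell_n< s_n+\ell_n$. So the short side is $s_{n+1}=\ell_n$ and the long side is $\ell_{n+1}=s_n+\ell_n$, which gives $R_{n+1}=(\ell_n,s_n+\ell_n)$.

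I expect no real obstacle. The one thing to make explicit is the reading of ``adjoins along a side of length $\ell_n$'' as full coincidence of a square side with a rectangle side. A partial or offset attachment would not produce a rectangle, so this reading is exactly what the rules of the game force.
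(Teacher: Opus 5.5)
Your proof is correct and follows the same approach as the paper: the side of length $\ell_n$ is unchanged, and the orthogonal side grows from $s_n$ to $s_n+\ell_n$, which is the longer side because $s_n>0$. Your coordinates, the non-overlap check, and the full-side-attachment reading only spell out the paper's one-line argument in more detail.
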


\begin{proof}
The side of length $\ell_n$ remains unchanged, while the orthogonal side increases from $s_n$ to $s_n+\ell_n$, which is now the longer side.
\end{proof}

\begin{remark}\label{rem:no-return}
Once a square size cannot be used, it can never become usable later. Indeed, Proposition~\ref{prop:update} shows that the long side strictly increases at each step, while the new short side is the previous long side. Hence a missed size can never reappear as the required long side. In particular, once the $4\times4$ tile is impossible in Figure~\ref{fig:forced-steps}(d), it is gone forever.
\end{remark}

Starting from the $1\times2$ seed, the rectangles therefore evolve as
\[
(1,2),\ (2,3),\ (3,5),\ (5,8),\dots,
\]
and the side lengths follow the Fibonacci recursion.

\begin{definition}[Fibonacci numbers]
The Fibonacci numbers are defined by $F_0=0$, $F_1=1$, and $F_{n+1}=F_n+F_{n-1}$ for $n\ge 1$.
\end{definition}

\begin{remark}
With the convention that the two $1\times1$ squares have side lengths $F_1=F_2=1$, the rectangle present after the square of side $F_n$ has been added has dimensions $(F_n,F_{n+1})$.
\end{remark}

The familiar area identity follows immediately.

\begin{theorem}[Sum of squares identity]
For every $n\ge 1$,
\[
F_1^2+F_2^2+\cdots+F_n^2 \ =\ F_nF_{n+1}.
\]
\end{theorem}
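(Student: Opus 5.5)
The plan is to prove the identity by computing the area of the bounding rectangle in two ways, and then to back this up with a short induction that does not depend on the pictures.

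\emph{Geometric argument.} By the preceding remark, once the squares of sides $F_1,F_2,\dots,F_n$ have been placed, the tiles form a rectangle of dimensions $(F_n,F_{n+1})$. The squares do not overlap and their union is exactly this rectangle. Its area is therefore, on the one hand, the sum of the tile areas $F_1^2+\cdots+F_n^2$, and on the other hand the product of its sides $F_nF_{n+1}$. Equating the two expressions gives the identity.

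\emph{Algebraic induction.} The only point needing care is that the remark identifying the rectangle as $(F_n,F_{n+1})$ is itself an induction on Proposition~\ref{prop:update}. To make this explicit, I would argue as follows.
\begin{itemize}
\item[] \textbf{Base case} $n=1$: here $F_1^2=1=F_1F_2$.
\item[] \textbf{Inductive step:} assuming the identity for $n$, add $F_{n+1}^2$ to both sides. This gives
\[
F_nF_{n+1}+F_{n+1}^2 \;=\; F_{n+1}\,(F_n+F_{n+1}) \;=\; F_{n+1}F_{n+2},
\]
using the defining recursion $F_{n+2}=F_{n+1}+F_n$.
\end{itemize}
The geometric meaning of this step is the rectangle update: adjoining the $F_{n+1}\times F_{n+1}$ square along the long side of the $(F_n,F_{n+1})$ rectangle adds area $F_{n+1}^2$ and produces the $(F_{n+1},F_{n+2})$ rectangle.

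I expect no real obstacle. The only subtlety is the indexing convention: the two initial unit squares are labeled $F_1$ and $F_2$, so at $n=1$ the "rectangle" is the single $1\times1$ square with sides $(F_1,F_2)=(1,1)$. This fits the formula, and the induction handles it cleanly.
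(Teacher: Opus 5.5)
Your proposal is correct, and your geometric argument is exactly the paper's proof: the squares of sides $F_1,\dots,F_n$ tile the $(F_n,F_{n+1})$ bounding rectangle without overlap, so equating $\sum_{k=1}^n F_k^2$ with $F_nF_{n+1}$ gives the identity. Your supplementary induction is also valid and gives a self-contained, picture-free proof the paper does not include; note that it proves the identity directly rather than the rectangle remark you said it would make explicit.
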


\begin{proof}
The identity is classical; see for example \cite{BenjaminQuinn} for a proof; for completeness we provide one below. At stage $n$, the bounding rectangle has dimensions $(F_n,F_{n+1})$, hence area $F_nF_{n+1}$. The same region is tiled by squares of sides $F_1,\dots,F_n$, so its area is $\sum_{k=1}^n F_k^2$. Equating the two expressions gives the identity.
\end{proof}

\begin{figure}[H]
  \centering
  \includegraphics[width=0.34\linewidth]{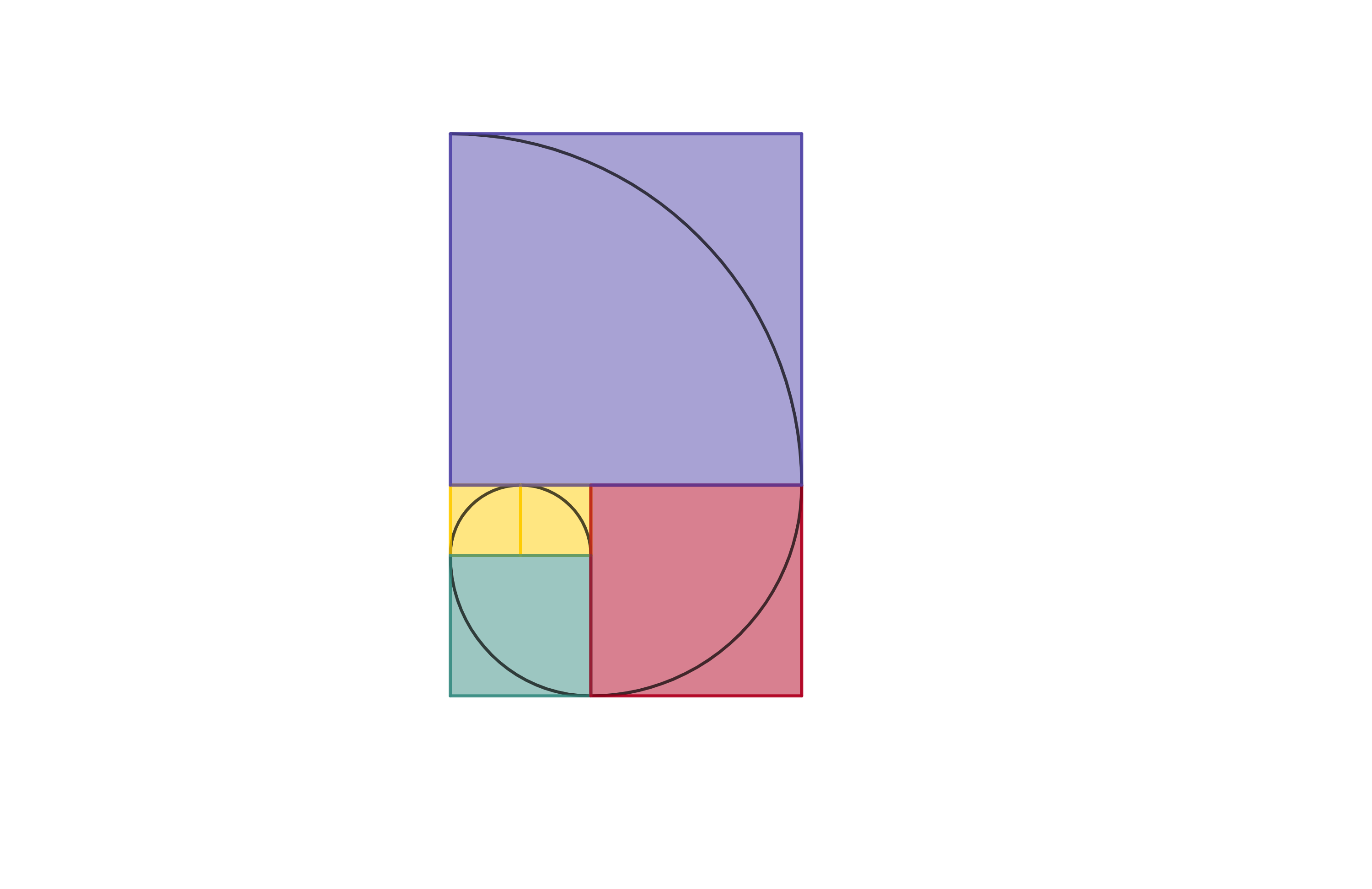}
  \caption{Quarter-circles drawn inside the successive squares produce the familiar Fibonacci spiral.}
  \label{fig:classical-spiral}
\end{figure}

\section{Square seed versus rectangle seed}
If one starts from a square, there are four places to attach the first square, but all four are equivalent by rotation. In that sense there is only one first-move behaviour.

If instead one starts from a non-square rectangle, the situation changes. There are still four geometric placements for the first square, but now two of them lie on long sides and two lie on short sides. These are not equivalent.

\begin{proposition}[Four choices, two equivalence classes]\label{prop:four-two}
Let the starting rectangle have side lengths $S_1$ and $S_2$ with
\[
0 \ <\ S_1 \ <\ S_2.
\]
The four possible first square placements split into exactly two equivalence classes up to rotation:
\begin{enumerate}
\renewcommand{\labelenumi}{(\roman{enumi})}
\item \textbf{Long-side-first:} attach the square of side length $S_2$,
\item \textbf{Short-side-first:} attach the square of side length $S_1$.
\end{enumerate}
The two placements within each class are related by a half-turn.
\end{proposition}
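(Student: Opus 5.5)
Place the starting rectangle in coordinates as $[0,S_2]\times[0,S_1]$, so the two long sides are the bottom and top edges (length $S_2$) and the two short sides are the left and right edges (length $S_1$). The plan is first to pin down the four legal first moves, then to exhibit the symmetries that pair them, and finally to give an invariant showing the two pairs cannot be identified.

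\textbf{Step 1: the four moves.} For the union to remain a rectangle, the new square must share a full side with the rectangle, so its side length equals the length of the side it is glued to. This yields exactly four placements:
\begin{itemize}
\item $[0,S_2]\times[S_1,S_1+S_2]$ and $[0,S_2]\times[-S_2,0]$, using a square of side $S_2$ on a long side;
\item $[S_2,S_2+S_1]\times[0,S_1]$ and $[-S_1,0]\times[0,S_1]$, using a square of side $S_1$ on a short side.
\end{itemize}

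\textbf{Step 2: pairing by the half-turn.} The half-turn about the centre $(S_2/2,S_1/2)$ maps the rectangle to itself. It swaps the top and bottom edges, and it swaps the left and right edges. Hence it carries each placement to the other placement in the same class. This gives the final sentence of the statement and shows there are at most two classes.

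\textbf{Step 3: the classes are distinct.} An invariant separates them: rigid motions preserve the side length of the attached square, and also the dimensions of the resulting rectangle. Long-side-first uses a square of side $S_2$ and gives a rectangle with sides $S_2$ and $S_1+S_2$. Short-side-first uses a square of side $S_1$ and gives a rectangle with sides $S_1$ and $S_1+S_2$. Since $S_1<S_2$, these differ, so no rotation, or even reflection, can map one class to the other.

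The only point needing care, and the main obstacle, is Step 1's claim that the square must match a full side exactly. A square placed against a side of length $L$ yields a rectangular union only if its side against that edge coincides with the whole edge. If the square is shorter or offset, the union has a notch; if it is longer, the union overhangs. In either case the union is not a rectangle, which forces the side length to be $L$ and the position to be flush. Beyond that, the argument is routine; I would also note explicitly that the rectangle's symmetry group is only $\{\text{id},\text{half-turn}\}$ among rotations, because the quarter-turn fails when $S_1\ne S_2$.
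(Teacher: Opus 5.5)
Your proof is correct and follows the same route as the paper: the half-turn swaps the two long sides and the two short sides, which pairs the placements within each class, and the side length of the attached square ($S_2$ versus $S_1$, distinct since $S_1<S_2$) separates the two classes. Your Step~1 argument that the square must be flush with an entire side, so that exactly four placements exist, is a detail the paper takes for granted, and it is a welcome addition.
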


\begin{proof}
The two long sides are opposite, and the two short sides are opposite. Hence the two long-side placements are rotationally equivalent, and the two short-side placements are rotationally equivalent. Since $S_1 \neq S_2$, attaching an $S_2\times S_2$ square is not equivalent to attaching an $S_1\times S_1$ square.
\end{proof}

\begin{remark}
If $S_2/S_1$ is a positive integer, then by changing units this is equivalent to starting with a $1 \times (S_2/S_1)$ integer-sided rectangle. In general, of course, $S_2/S_1$ need not be an integer; if we rescale, the same argument applies, except that instead of having one of each integer-sided square we would have one of each rational-sided square.
\end{remark}

This suggests that the right object is not merely an unordered rectangle, but an \emph{ordered seed}.

\begin{definition}[Ordered seed]
An ordered seed is a pair $(S_1,S_2)$ of positive numbers. Geometrically, one starts from an $S_1\times S_2$ rectangle and declares that the first appended square has side length $S_2$.
\end{definition}

For the same underlying non-square rectangle, the two classes in Proposition~\ref{prop:four-two} are represented by the two ordered seeds
\[
(S_1,S_2)\qquad\text{and}\qquad(S_2,S_1).
\]
Equivalently, if
\[
\rho_2 \ :=\ \frac{S_2}{S_1},
\]
then the two classes are exactly
\[
\rho_2 \ >\ 1\qquad\text{and}\qquad \rho_2 \ <\ 1.
\]
The missing case $\rho_2=1$ is exactly the square seed, where the four first moves collapse to a single class by rotation.

The next two figures illustrate these two classes in the normalized case $S_1=1$ and $S_2=N$, which serves as a convenient representative example.

\begin{figure}[H]
  \centering
  \begin{subfigure}[t]{0.32\linewidth}
    \centering\vspace{0pt}
   \parbox[c][4.1cm][c]{\linewidth}{%
  \centering
  \includegraphics[width=\linewidth,height=4.1cm,keepaspectratio]{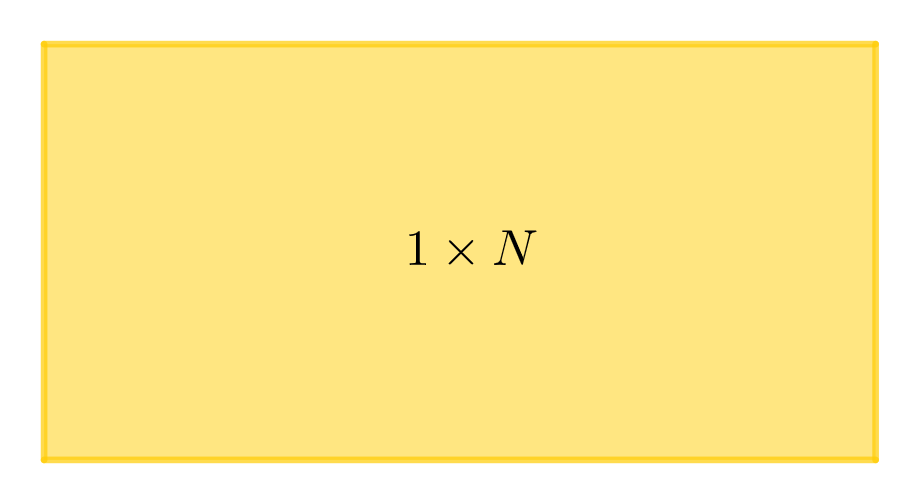}%
}
    \caption{$1\times N$ starting shape.}
  \end{subfigure}\hfill
  \begin{subfigure}[t]{0.32\linewidth}
    \centering\vspace{0pt}
    \parbox[c][4.1cm][c]{\linewidth}{%
  \centering
  \includegraphics[width=\linewidth,height=4.1cm,keepaspectratio]{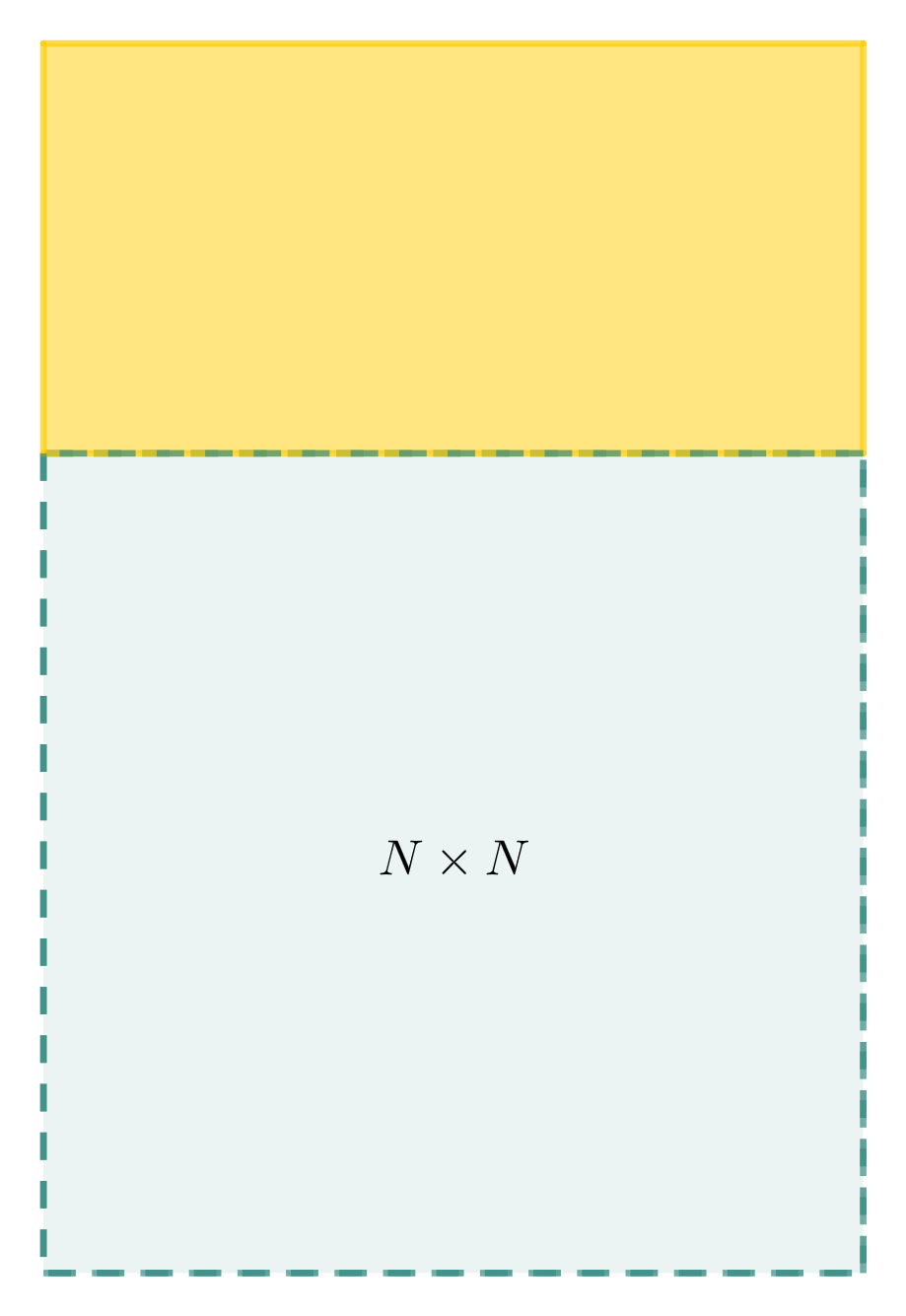}%
}
    \caption{First move: append the $N\times N$ square.}
  \end{subfigure}\hfill
  \begin{subfigure}[t]{0.32\linewidth}
    \centering\vspace{0pt}
   \parbox[c][4.1cm][c]{\linewidth}{%
  \centering
  \includegraphics[width=\linewidth,height=4.1cm,keepaspectratio]{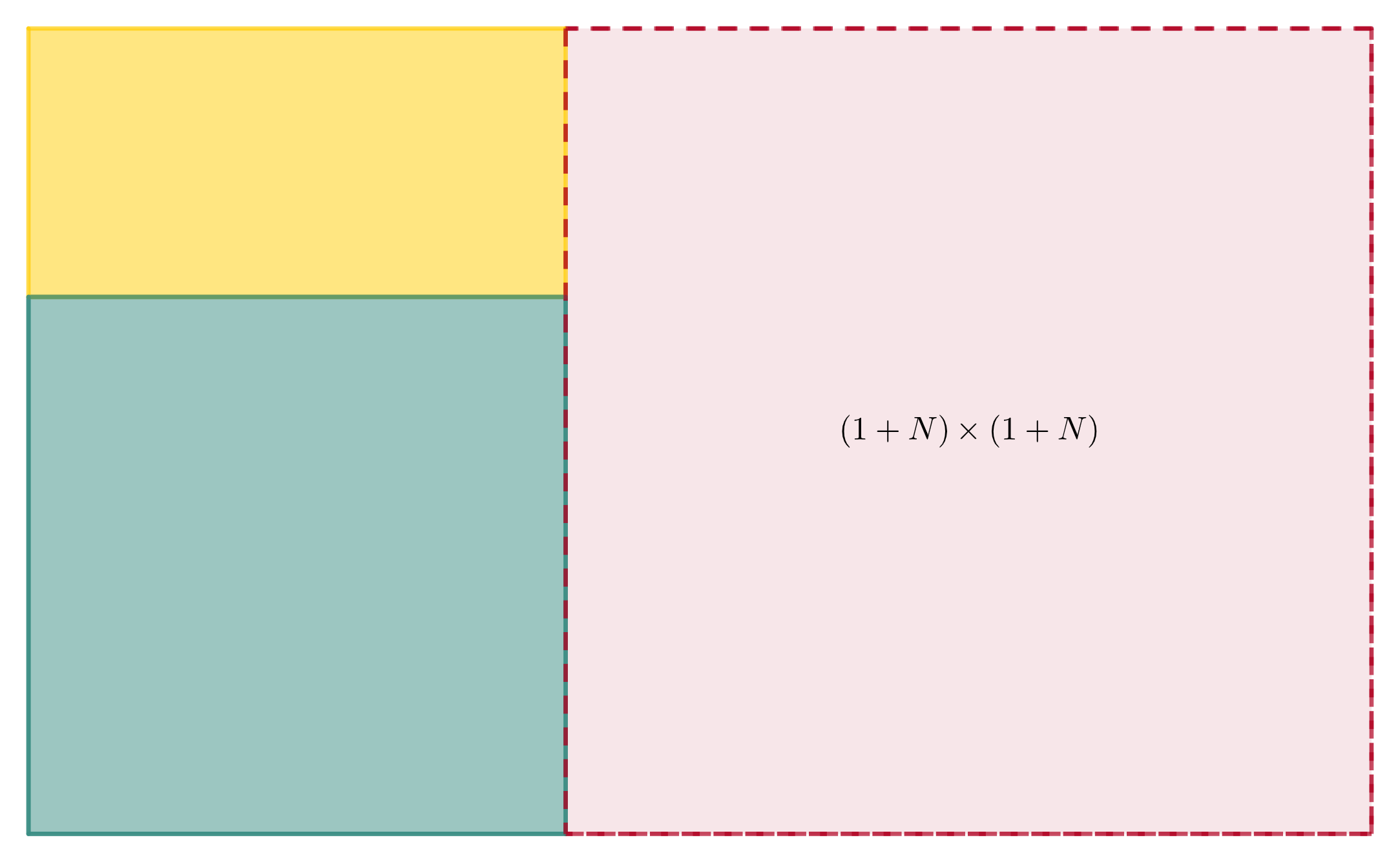}%
}
    \caption{From then on, the same update rule takes over.}
  \end{subfigure}
  \caption{The long-side-first class. The first move is genuinely different, but the subsequent growth is the usual rectangle update.}
  \label{fig:long-first}
\end{figure}

\begin{figure}[H]
  \centering
  \begin{subfigure}[t]{0.32\linewidth}
    \centering\vspace{0pt}
    \parbox[c][4.1cm][c]{\linewidth}{%
  \centering
  \includegraphics[width=\linewidth,height=4.1cm,keepaspectratio]{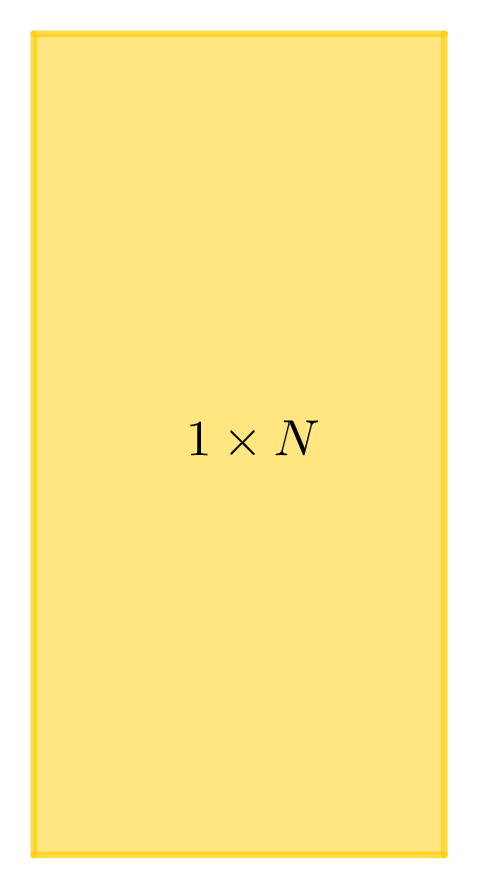}%
}
    \caption{$1\times N$ starting shape, now viewed in the short-side-first orientation.}
  \end{subfigure}\hfill
  \begin{subfigure}[t]{0.32\linewidth}
    \centering\vspace{0pt}
   \parbox[c][4.1cm][c]{\linewidth}{%
  \centering
  \includegraphics[width=\linewidth,height=4.1cm,keepaspectratio]{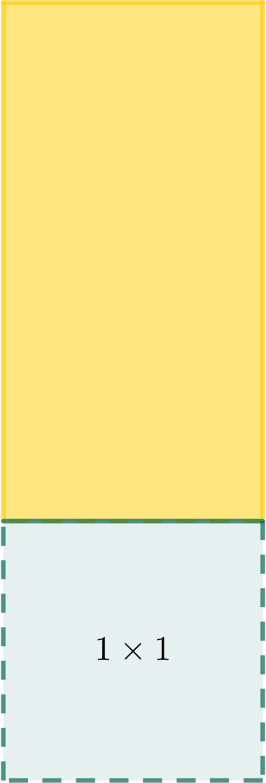}%
}
    \caption{First move: append the $1\times 1$ square.}
  \end{subfigure}\hfill
  \begin{subfigure}[t]{0.32\linewidth}
    \centering\vspace{0pt}
    \parbox[c][4.1cm][c]{\linewidth}{%
  \centering
  \includegraphics[width=\linewidth,height=4.1cm,keepaspectratio]{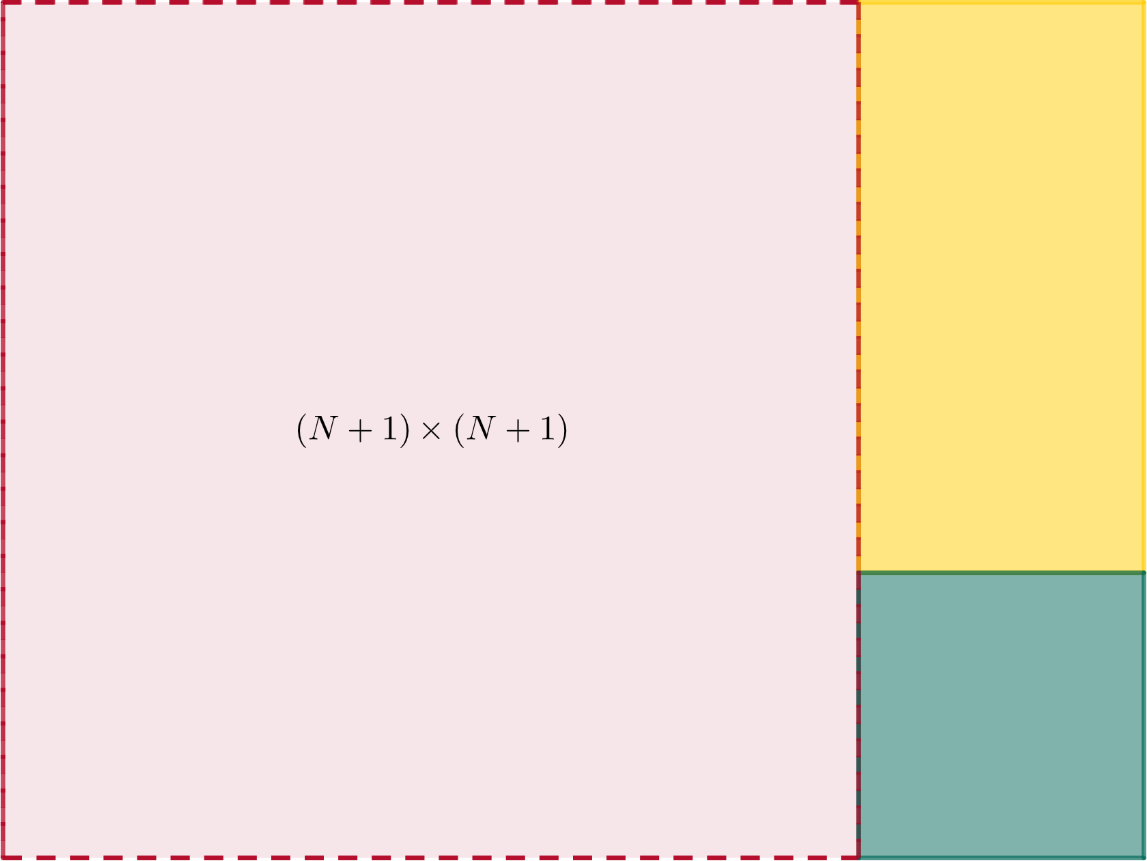}%
}
    \caption{From then on, the same update rule takes over.}
  \end{subfigure}
  \caption{The short-side-first class. The initial move differs, but the later evolution is again Fibonacci-type.}
  \label{fig:short-first}
\end{figure}

Once an ordered seed $(S_1,S_2)$ is fixed, the entire square-adjoining cascade is determined by the same recursion, regardless of the initial class.

\begin{definition}[Span sequence]
Given positive $S_1$ and $S_2$, define $(S_n)_{n\ge 1}$ by
\[
S_{n+1} \ =\ S_n+S_{n-1}\qquad(n \ \ge\ 2).
\]
\end{definition}

\begin{proposition}[Bounding rectangles from an ordered seed]\label{prop:seed-rectangles}
Starting from the ordered seed $(S_1,S_2)$, for each $n\ge 2$, after the square of side $S_n$ has been appended, the bounding rectangle has side lengths $S_n$ and $S_{n+1}$.
\end{proposition}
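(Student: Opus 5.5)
I will argue by induction on $n\ge 2$, tracking the bounding rectangle as an unordered pair of side lengths. The one subtlety, and the main thing to handle, is that Proposition~\ref{prop:update} is phrased for an ordered pair $(s_n,\ell_n)$ with $s_n\le \ell_n$ and for adjoining the square along the long side. For a general ordered seed, $S_2$ may be smaller than $S_1$. So I will first state a version of the update that does not depend on which side is longer: if the current rectangle has sides $a$ and $b$ and one adjoins a $b\times b$ square along a side of length $b$, the new rectangle has sides $b$ and $a+b$. The proof is exactly the one given for Proposition~\ref{prop:update}: the side of length $b$ is shared and unchanged, and the orthogonal side grows from $a$ to $a+b$. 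I will also observe that $a+b>b$ because $a>0$, so after any such move the most recently appended square sits along the short side of the new rectangle.

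\textbf{Base case $n=2$.} By the definition of an ordered seed, we start from the $S_1\times S_2$ rectangle and append a square of side $S_2$ along a side of length $S_2$. The general update then gives a rectangle with sides $S_2$ and $S_1+S_2=S_3$, as required. This holds whether $\rho_2>1$ or $\rho_2<1$; in the second case the first move is on the long side of the original rectangle, but the arithmetic is the same.

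\textbf{Inductive step.} Suppose that after appending the square of side $S_n$ the rectangle has sides $S_n$ and $S_{n+1}$, where $S_{n+1}>S_n$ by the remark above. The next square in the cascade has side $S_{n+1}$. It is attached along a side of length $S_{n+1}$, which is the current long side, so this move is exactly the situation of Proposition~\ref{prop:update} with $s=S_n$ and $\ell=S_{n+1}$. The new rectangle therefore has sides $S_{n+1}$ and $S_n+S_{n+1}=S_{n+2}$, using the span recursion. This closes the induction.

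I expect the only real obstacle to be a conceptual point rather than a computation: making explicit that \emph{the cascade} means each subsequent square has side equal to the current long side. This is the only placement that preserves rectangularity, in the spirit of Remark~\ref{rem:no-return}. It also means that from stage $2$ onward the ordering $S_n<S_{n+1}$ is automatic, even if $S_1>S_2$. Once that is stated, both first-move classes reduce to the same recursion.
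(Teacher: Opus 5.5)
Your proof is correct and follows the paper's argument: the first move gives sides $S_2$ and $S_1+S_2=S_3$, and then ``the same argument iterates.'' You write the iteration out as an explicit induction and use a side-agnostic version of Proposition~\ref{prop:update} for the first step. One small slip: when $\rho_2<1$ the first square is attached along the \emph{short} side of the seed rectangle (the short-side-first class), not the long side, though your side-agnostic update makes this harmless to the argument.
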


\begin{proof}
The first move replaces the side $S_1$ by $S_3:=S_1+S_2$ while keeping $S_2$ fixed, so the new rectangle has side lengths $S_2$ and $S_3$. The same argument iterates.
\end{proof}

\begin{proposition}[Closed form]\label{prop:closed}
For every ordered seed,
\[
S_n \ =\ F_{n-2}S_1+F_{n-1}S_2\qquad(n \ \ge\ 2).
\]
\end{proposition}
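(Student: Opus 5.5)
The plan is a direct induction on $n$ using the span recursion $S_{n+1}=S_n+S_{n-1}$ from the definition of the span sequence, together with the Fibonacci recursion $F_{k+1}=F_k+F_{k-1}$. Because both sequences satisfy the same second-order linear recursion, it suffices to check that the claimed closed form agrees with $(S_n)$ at two consecutive indices and then show that the right-hand side obeys the same recursion.

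\textbf{Base cases.} The formula involves $F_{n-2}$, so the smallest admissible index is $n=2$, which needs $F_0=0$ and $F_1=1$. This gives $F_0S_1+F_1S_2=S_2$, as required. For $n=3$ we get $F_1S_1+F_2S_2=S_1+S_2$, and this equals $S_3$ by the span recursion with $n=2$. The only care needed is that $F_2=F_1+F_0=1$ follows from the definition of the Fibonacci numbers.

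\textbf{Inductive step.} Assume the formula holds for indices $n-1$ and $n$, with $n\ge 3$. Then
\[
S_{n+1}\ =\ S_n+S_{n-1}\ =\ (F_{n-2}+F_{n-3})S_1+(F_{n-1}+F_{n-2})S_2\ =\ F_{n-1}S_1+F_nS_2.
\]
The last equality uses the Fibonacci recursion at indices $n-2\ge 1$ and $n-1\ge 2$, so both applications are valid. This is exactly the claimed formula at index $n+1$, and strong induction closes the argument.

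There is no real obstacle. The one point to watch is the index bookkeeping at the bottom of the range: the recursion $F_{k+1}=F_k+F_{k-1}$ is only stated for $k\ge 1$, so the inductive step must start from the two base cases $n=2,3$ rather than trying to use an undefined $S_1=F_{-1}S_1+F_0S_2$. An alternative presentation would write $(S_{n},S_{n+1})^{T}$ as $\begin{pmatrix}0&1\\1&1\end{pmatrix}^{n-1}(S_1,S_2)^{T}$ and use the standard Fibonacci entries of this matrix power. The two-term induction is more elementary and better suited to the outreach tone of the paper.
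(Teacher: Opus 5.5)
Your proposal is correct and matches the paper's proof essentially verbatim: the same base cases $n=2,3$ and the same two-term inductive computation $S_{n+1}=S_n+S_{n-1}=F_{n-1}S_1+F_nS_2$. Your explicit index check, that the Fibonacci recursion is used only at indices $n-2\ge 1$ and $n-1\ge 2$, usefully spells out what the paper leaves as ``immediate.''
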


\begin{proof}
The formula is immediate for $n=2$ and $n=3$. Assume the formula holds for $n$ and $n-1$, where $n\ge 3$. Then
\[
S_{n+1} \ =\ S_n+S_{n-1}
\ =\ (F_{n-2}+F_{n-3})S_1+(F_{n-1}+F_{n-2})S_2
\ =\ F_{n-1}S_1+F_nS_2. \qedhere
\]
\end{proof}

\begin{definition}[Ratio dynamics]
For $n\ge 2$, define
\[
\rho_n \ :=\ \frac{S_n}{S_{n-1}}.
\]
\end{definition}

\begin{proposition}[Universal update]\label{prop:ratio}
For every positive seed,
\[
\rho_{n+1} \ =\ 1+\frac{1}{\rho_n}\qquad(n \ \ge\ 2).
\]
\end{proposition}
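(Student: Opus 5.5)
The plan is a direct computation from the span recursion, followed by a check that every quantity involved is well defined. By the definition of the span sequence, $S_{n+1}=S_n+S_{n-1}$ for $n\ge 2$. Dividing both sides by $S_n$ gives
\[
\rho_{n+1} \ =\ \frac{S_{n+1}}{S_n} \ =\ 1+\frac{S_{n-1}}{S_n} \ =\ 1+\frac{1}{\rho_n},
\]
which is exactly the claimed identity.

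The only step that needs care is justifying the divisions: we need $S_n>0$ for all $n\ge 1$. This follows by a short induction. The seed values $S_1,S_2$ are positive by hypothesis. If $S_{n-1},S_n>0$, then $S_{n+1}=S_n+S_{n-1}>0$. Alternatively, one can cite Proposition~\ref{prop:closed}: $S_n=F_{n-2}S_1+F_{n-1}S_2$ with $F_{n-2}\ge 0$, $F_{n-1}\ge 1$ for $n\ge 2$, so $S_n\ge S_2>0$.

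Positivity guarantees that $\rho_n$ is defined and nonzero for every $n\ge 2$, so $1/\rho_n=S_{n-1}/S_n$ is legitimate. Since the seed is otherwise arbitrary, the identity holds regardless of whether $\rho_2>1$ or $\rho_2<1$. It therefore covers both first-move classes of Proposition~\ref{prop:four-two} at once, which is why the update is called universal.

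There is no real obstacle here. The positivity check is the one point to state explicitly rather than leave implicit.
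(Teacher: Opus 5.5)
Your proposal is correct and matches the paper's proof: divide the recursion $S_{n+1}=S_n+S_{n-1}$ by $S_n$ to get $1+S_{n-1}/S_n=1+1/\rho_n$. Your explicit check that $S_n>0$, so that every division is legitimate, is a small but worthwhile addition that the paper leaves implicit.
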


\begin{proof}
Using the recursion,
\[
\rho_{n+1} \ =\ \frac{S_{n+1}}{S_n} \ =\ \frac{S_n+S_{n-1}}{S_n} \ =\ 1+\frac{S_{n-1}}{S_n}\ =\ 1+\frac{1}{\rho_n}.\qedhere
\]
\end{proof}

\begin{theorem}[Same limit in both classes]\label{thm:phi}
For every positive ordered seed $(S_1,S_2)$, the ratios $\rho_n$ are defined for $n\ge 2$ and satisfy
\[
\rho_n \ \longrightarrow\ \varphi \ :=\ \frac{1+\sqrt5}{2}.
\]
In particular, both first-move classes converge to the same limiting aspect ratio.
\end{theorem}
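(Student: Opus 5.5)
Since both $S_1$ and $S_2$ are positive, the span recursion makes every $S_n$ positive. Hence every $\rho_n$ with $n\ge 2$ is defined and positive. The convergence itself I would prove through the closed form of Proposition~\ref{prop:closed}, which reduces the question to the classical limit $F_{n}/F_{n-1}\to\varphi$. I would also give a self-contained contraction argument based on Proposition~\ref{prop:ratio}, as a cross-check that does not need Binet's formula.

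\emph{Route via the closed form.} For $n\ge 3$, Proposition~\ref{prop:closed} gives
\[
\rho_n \ =\ \frac{F_{n-2}S_1+F_{n-1}S_2}{F_{n-3}S_1+F_{n-2}S_2}.
\]
Divide numerator and denominator by $F_{n-2}$. Using $F_{n-1}/F_{n-2}\to\varphi$ and $F_{n-3}/F_{n-2}\to 1/\varphi$, the ratio tends to
\[
\frac{S_1+\varphi S_2}{S_1/\varphi+S_2}\ =\ \varphi.
\]
This uses only that the denominator limit $S_1/\varphi+S_2$ is positive. The classical fact $F_{n}/F_{n-1}\to\varphi$ follows from Binet's formula $F_n=(\varphi^n-\psi^n)/\sqrt5$ with $|\psi|<1$, where $\psi=(1-\sqrt5)/2$.

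\emph{Direct route via Proposition~\ref{prop:ratio}.} Set $f(x)=1+1/x$ on $(0,\infty)$. Its unique positive fixed point is $\varphi$, since $\varphi^2=\varphi+1$. For positive $x$,
\[
f(x)-\varphi\ =\ \frac1x-\frac1\varphi\ =\ -\frac{x-\varphi}{x\varphi}.
\]
So the error $e_n=\rho_n-\varphi$ satisfies $|e_{n+1}|=|e_n|/(\rho_n\varphi)$.

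The main obstacle is that $\rho_2$ can be arbitrarily small; this is exactly the short-side-first class, where $\rho_2<1$. For such $\rho_2$ the factor $1/(\rho_n\varphi)$ is not less than $1$. To get around this, I would first observe that $\rho_{n}=1+1/\rho_{n-1}>1$ for every $n\ge 3$. Consequently $\rho_{n}=1+1/\rho_{n-1}<2$ for $n\ge 4$. Thus from $n\ge 4$ on, $\rho_n\in(1,2)$, so $|e_{n+1}|\le |e_n|/\varphi$. Geometric decay with ratio $1/\varphi<1$ then gives $e_n\to 0$, independently of the seed.

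Since neither route uses any condition on the order of $S_1$ and $S_2$ beyond positivity, the conclusion covers both first-move classes simultaneously.
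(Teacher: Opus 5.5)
Your proof is correct, and your main route is exactly the paper's argument: you use the closed form, divide by $F_{n-2}$, and invoke $F_{n-1}/F_{n-2}\to\varphi$ and $F_{n-3}/F_{n-2}\to\varphi^{-1}$, adding explicit checks that every $S_n$ and the limiting denominator are positive. Your contraction cross-check, based on $|\rho_{n+1}-\varphi|=|\rho_n-\varphi|/(\rho_n\varphi)$ together with $\rho_n>1$ for $n\ge 3$ (the upper bound $\rho_n<2$ is not actually needed), is also valid and goes beyond the paper: it is self-contained, gives an explicit geometric rate, and does not rely on the classical Fibonacci-ratio limit.
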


\begin{proof}
For $n\ge 3$, Proposition~\ref{prop:closed} gives
\[
\rho_n \ =\ \frac{S_n}{S_{n-1}}
\ =\ \frac{F_{n-2}S_1+F_{n-1}S_2}{F_{n-3}S_1+F_{n-2}S_2}.
\]
Dividing numerator and denominator by $F_{n-2}$ yields
\[
\rho_n \ =\
\frac{S_1+\left(F_{n-1}/F_{n-2}\right)S_2}
{\left(F_{n-3}/F_{n-2}\right)S_1+S_2}.
\]
As is well known, $F_{n-1}/F_{n-2}\to\varphi$ and hence $F_{n-3}/F_{n-2}\to\varphi^{-1}$; see, for example, \cite{Koshy}. Therefore,
\[
\lim_{n\to\infty}\rho_n
\ =\
\frac{S_1+\varphi S_2}{\varphi^{-1}S_1+S_2}
\ =\ \varphi.\qedhere
\]
\end{proof}

\begin{remark}
The difference between the two classes is entirely in the initial condition. If $\rho_2>1$, the process starts long-side-first. If $\rho_2<1$, the first step is short-side-first, but then
\[
\rho_3 \ =\ 1+\frac{1}{\rho_2} \ >\ 2,
\]
so after one step the same growth pattern takes over.
\end{remark}

\section{A triangle can determine the first move}
We now add the brief geometric remark that motivated this note. A right triangle can supply the ordered seed intrinsically, and hence determine one of the two first-move classes. There are four such HAI realizations (two choices of pivot and two branches), corresponding to two outward and two inward constructions; however, for the present discussion it suffices to illustrate one pivot with its outward and inward cases.

Let $\triangle ABC$ be right-angled at $A$. Choose an acute pivot vertex $V$, let $u=|AV|$ be the leg on the landing axis, let $v$ be the other leg, and let $c$ be the hypotenuse. See Figure~\ref{fig:hai-two-branches}.

\begin{figure}[H]
  \centering
  \begin{subfigure}[t]{0.49\linewidth}
    \centering
    \includegraphics[width=\linewidth,height=5.2cm,keepaspectratio]{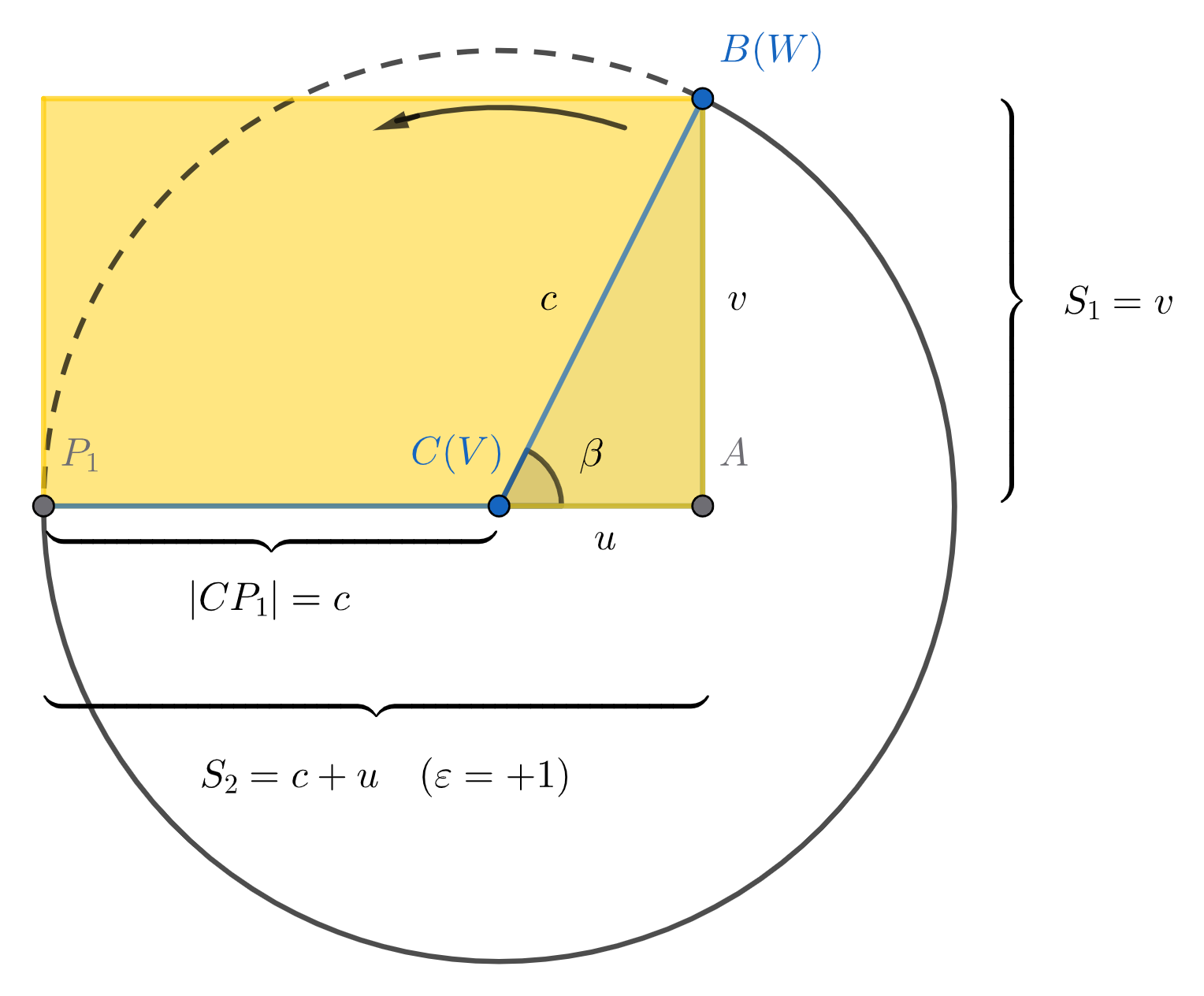}
    \caption{Outward landing: $S_2=c+u$.}
  \end{subfigure}\hfill
  \begin{subfigure}[t]{0.49\linewidth}
    \centering
    \includegraphics[width=\linewidth,height=5.2cm,keepaspectratio]{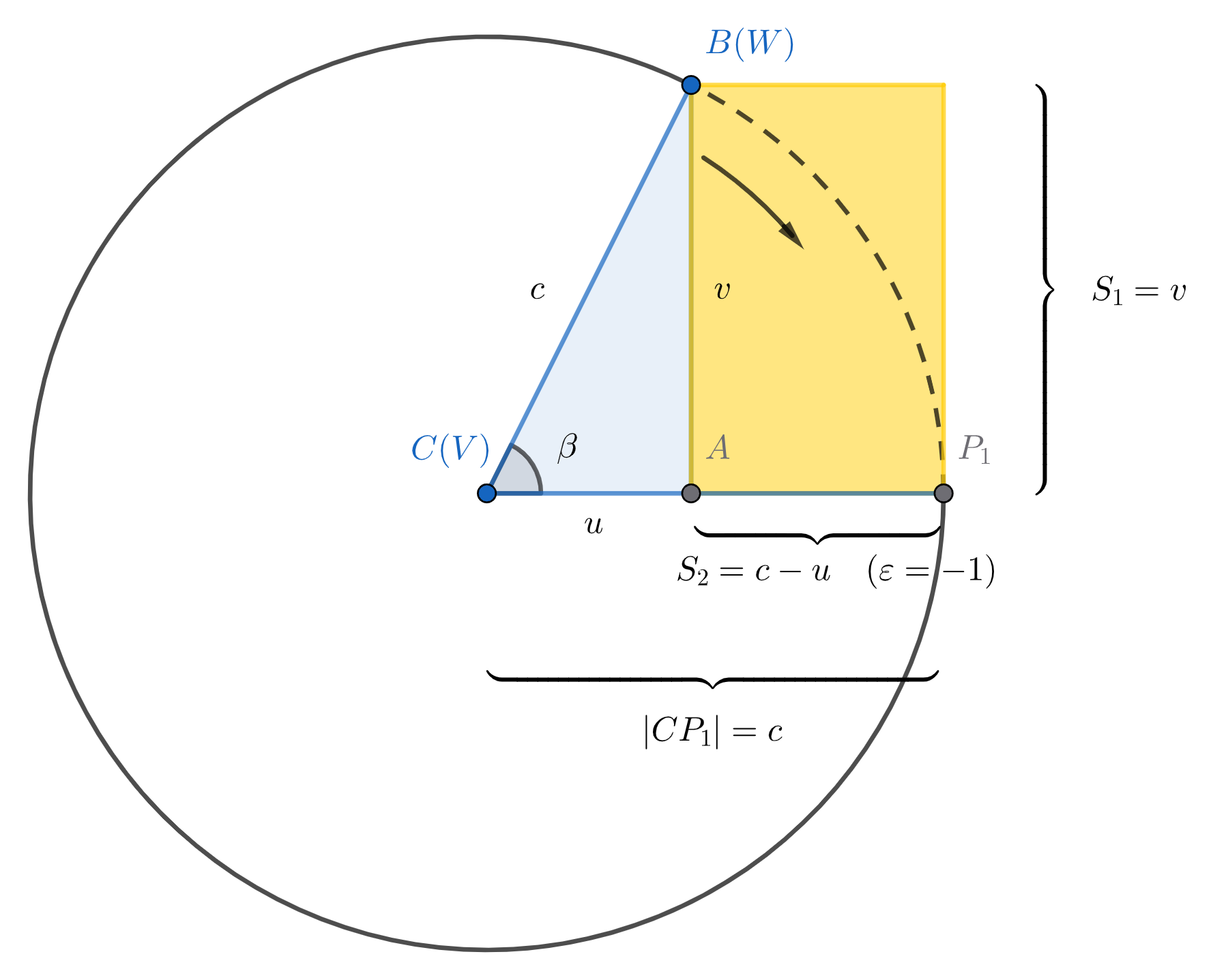}
    \caption{Inward landing: $S_2=c-u$.}
  \end{subfigure}
  \caption{The HAI construction produces the seed $S_1=v$ and determines the first class: outward gives $S_2>S_1$ (long-side-first), while inward gives $S_2<S_1$ (short-side-first).}
  \label{fig:hai-two-branches}
\end{figure}

\begin{proposition}[HAI seed and class selection]\label{prop:hai}
The Hypotenuse--Axis Intercept construction produces the ordered seed
\[
S_1 \ =\ v,\qquad S_2 \ =\ c+\varepsilon u,
\]
where $\varepsilon=+1$ for the outward branch and $\varepsilon=-1$ for the inward branch. Consequently,
\[
S_2 \ >\  S_1 \quad \text{in the outward case},\qquad
S_2 \ <\ S_1 \quad \text{in the inward case}.
\]
Thus the outward and inward branches determine exactly the long-side-first and short-side-first rectangle classes.
\end{proposition}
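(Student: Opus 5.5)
We first pin down the construction, then prove the two inequalities. Place $A$ at the origin with the landing axis along the leg $AV$, so $V=(u,0)$ and the third vertex is $W=(0,v)$. The HAI construction rotates the hypotenuse $VW$ about the pivot $V$ onto the landing axis. It therefore marks the two points at distance $c$ from $V$ on that axis, namely $V\pm(c,0)$.

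The seed is read off from this picture. The outward intercept lies beyond $V$ away from $A$, at distance $u+c$ from $A$. The inward intercept lies on the other side of $V$, at signed distance $u-c$ from $A$, hence at distance $|u-c|=c-u$ from $A$. The other leg has length $v$. So the ordered seed is $S_1=v$ and $S_2=c+\varepsilon u$, which justifies the first display of the statement. The only care needed is to match the paper's convention for which segment is called the span; I would take it to be the segment from $A$ to the intercept, as in Figure~\ref{fig:hai-two-branches}.

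Next come the inequalities, which rest on the Pythagorean relation $c^2=u^2+v^2$ with $u,v>0$.

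\emph{Outward case.} Since $c>v$ and $u>0$, we get $S_2=c+u>v=S_1$.

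\emph{Inward case.} We must show $c-u<v$. First, $c-u>0$ because $c>u$, so the seed is positive. Then $(c-u)(c+u)=c^2-u^2=v^2$ gives
\[
c-u \ =\ \frac{v^2}{c+u}.
\]
Since $c+u>v$, the right side is less than $v$. This factoring step is the only point that needs an idea rather than a direct comparison.

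Finally, we invoke the discussion after Proposition~\ref{prop:four-two}. The class of an ordered seed is determined by whether $\rho_2=S_2/S_1$ is greater or less than $1$. The outward branch gives $\rho_2>1$, which is long-side-first, and the inward branch gives $\rho_2<1$, which is short-side-first. The value $\rho_2=1$ is never attained, because both inequalities are strict. So neither branch ever produces the square seed, and the correspondence between branches and classes is exact.
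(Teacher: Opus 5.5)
Your proof is correct and follows the paper's outline. You read $S_1=v$ and $S_2=c\pm u$ off the construction, prove the two strict inequalities, and then use the description of the classes by $\rho_2>1$ versus $\rho_2<1$. The one genuine difference is the inward inequality. The paper rewrites $c-u<v$ as $c<u+v$ and cites the triangle inequality. That is a one-line step, and it does not even use the right angle. You instead use Pythagoras in the form $(c-u)(c+u)=v^2$ and deduce the inward case from the outward one. This costs a line more but proves something stronger, namely
\[
\frac{c-u}{v}\ =\ \frac{v}{c+u},
\]
so the inward value of $\rho_2$ is exactly the reciprocal of the outward value. Consequently the inward seed $(v,c-u)$ equals $\frac{v}{c+u}\,(c+u,v)$, a rescaling of the \emph{reversed} outward seed. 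The two branches therefore produce similar rectangles, with long-to-short ratio $(c+u)/v$ in both cases, and differ only in the first-move class. That is exactly the ``one underlying rectangle, two classes'' situation of Proposition~\ref{prop:four-two}, which the paper's triangle-inequality argument does not reveal. For the $1$--$2$--$\sqrt5$ triangle of Corollary~\ref{cor:golden}, it gives $\varphi$ for the outward branch and $\varphi^{-1}$ for the inward one.
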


\begin{proof}
By construction, $S_1$ is the leg orthogonal to the landing axis, so $S_1=v$, while the landing point is at distance $c\pm u$ from $A$. Hence
\[
S_2 \ =\ c+\varepsilon u.
\]
Since $c>u$, the inward quantity $c-u$ is positive. Moreover, $c+u>v$ is immediate, while
\[
c-u \ <\ v \qquad\Longleftrightarrow\qquad c \ <\ u+v,
\]
and the latter holds for every nondegenerate triangle.
\end{proof}

\begin{corollary}[The $1$--$2$--$\sqrt5$ triangle]\label{cor:golden}
In the outward branch with pivot at the acute vertex adjacent to the shorter leg,
\[
\frac{S_2}{S_1} \ =\ \varphi
\qquad\Longleftrightarrow\qquad
v \ =\ 2u.
\]
So the right triangle with side ratio $1:2:\sqrt5$ is the unique similarity class for which the HAI seed is already golden; equivalently, this consists of all triangles of the form $(r,2r,r\sqrt{5})$ with $r>0$.
\end{corollary}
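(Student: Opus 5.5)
We use Proposition~\ref{prop:hai}: in the outward branch the seed is $S_1=v$, $S_2=c+u$ with $c=\sqrt{u^2+v^2}$. Taking the pivot at the acute vertex adjacent to the shorter leg means the landing-axis leg $u$ is the shorter one, so $u\le v$. Scaling is harmless because $S_2/S_1$ is homogeneous of degree zero in $(u,v)$. We therefore set $t:=v/u\ge 1$ and study
\[
f(t) \ :=\ \frac{S_2}{S_1} \ =\ \frac{\sqrt{1+t^2}+1}{t}.
\]
The claim then becomes: $f(t)=\varphi$ if and only if $t=2$.

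For the direction $v=2u\Rightarrow S_2/S_1=\varphi$, we substitute $t=2$ to get $f(2)=(\sqrt5+1)/2=\varphi$. This is immediate.

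For the converse, we solve $\sqrt{1+t^2}+1=\varphi t$. We isolate the radical, $\sqrt{1+t^2}=\varphi t-1$, which requires $\varphi t\ge 1$. Squaring gives
\[
1+t^2 \ =\ \varphi^2t^2-2\varphi t+1,
\]
so $t\bigl((\varphi^2-1)t-2\varphi\bigr)=0$. Since $t>0$ and $\varphi^2-1=\varphi$, we get $\varphi t=2\varphi$, that is, $t=2$. This value satisfies $\varphi t\ge1$, so squaring introduced no spurious root.

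As an independent check of uniqueness, $f(t)=\sqrt{1+t^{-2}}+t^{-1}$ is strictly decreasing in $t$, so it takes the value $\varphi$ at most once. This monotonicity also shows why the pivot convention matters. For $t<1$ we have $f(t)>1+\sqrt2>\varphi$, so no solution is lost by restricting to $u\le v$, and the hypothesis mainly fixes which leg is called $u$.

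Finally, $v=2u$ means the legs are $r$ and $2r$ and the hypotenuse is $r\sqrt5$. That is exactly the similarity class $1:2:\sqrt5$, with family $(r,2r,r\sqrt5)$ for $r>0$. The only real obstacle is bookkeeping: making sure the identity $\varphi^2=\varphi+1$ is used cleanly and that the sign condition before squaring is verified.
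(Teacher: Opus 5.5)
Your proposal is correct and follows the paper's proof. Both normalize by $u$, reduce to the one-variable equation $(\sqrt{1+\lambda^2}+1)/\lambda=\varphi$, and solve it to get $\lambda=2$; you simply make explicit the algebra the paper leaves implicit (isolating the radical, squaring, and using $\varphi^2=\varphi+1$), and your monotonicity remark is a harmless extra check showing the pivot convention does not affect uniqueness.
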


\begin{proof}
In the outward branch,
\[
\frac{S_2}{S_1} \ =\ \frac{c+u}{v} \ =\ \frac{\sqrt{u^2+v^2}+u}{v}.
\]
Writing $\lambda=v/u>0$, this becomes
\[
\frac{\sqrt{1+\lambda^2}+1}{\lambda}.
\]
Setting this equal to $\varphi$ gives $\lambda=2$.
\end{proof}

\begin{remark}
The triangle does not change the rectangle dynamics. It supplies exactly the one datum that the rectangle story leaves open: which first-move class occurs. After that, the update is the same Fibonacci-type cascade studied above.
\end{remark}

\section{Conclusion and Future Work}

There are two main conclusions.

First, the square-adjoining Fibonacci story is naturally a story about \emph{ordered seeds}. Starting from a square, four first moves collapse to one behaviour. Starting from a non-square rectangle, four first moves collapse to two behaviours. Those two behaviours have different initial conditions, but the same recursion and the same limiting ratio $\varphi$.

Second, a right triangle can be used to determine the first move. The HAI construction selects one of the two classes intrinsically: outward means long-side-first, inward means short-side-first. In that sense the triangle does not introduce a new dynamic; it gives a geometric way to choose the initial class.
\medskip

As we have seen, our analysis applies to any ordered seed $(S_1,S_2)$ of positive side lengths. No divisibility assumption is required. The evolution depends only on the initial ratio $\rho_2 = S_2/S_1$. In particular, every non-square rectangle yields the same two classes, and the limiting ratio is $\varphi$.

This raises a natural further question concerning the corresponding area identity. For the classical Fibonacci seed $(1,1)$ one has the well-known formula
\[
\sum_{k=1}^n F_k^2 \ =\ F_nF_{n+1}.
\] 
It is thus natural to ask whether an analogous identity holds for a general ordered seed $(S_1,S_2)$, and how it depends on both the initial values and the initial choice of where to append the next square.

\section*{Acknowledgements}
The first named author used AI-based tools for language editing and assistance with expressing mathematical ideas.

\section*{Disclosure statement}
No conflict of interest has been reported by the authors.

\medskip
\noindent MSC2020: 11B39, 33C05


\begin{thebibliography}{99}

\bibitem{BenjaminQuinn}
Benjamin, A. T., Quinn, J. J. (2003).
\emph{Proofs That Really Count: The Art of Combinatorial Proof}.
Dolciani Mathematical Expositions, Vol.~27.
Washington, DC: Mathematical Association of America.

\bibitem{CheighFiboDigits}
Cheigh, J., Dantas e Moura, G. Z., Duke, J. L., Mauro, A., McDonald, Z., Mello, A., Miller, K., Miller, S. J., Velazquez Iannuzzelli, S. (2025).
An invitation to Fibonacci digits.
\emph{Fibonacci Quart.} 63(4): 702--710.
\href{https://doi.org/10.1080/00150517.2024.2442595}{doi.org/10.1080/00150517.2024.2442595}

\bibitem{Koshy}
Koshy, T. (2001).
\emph{Fibonacci and Lucas Numbers with Applications}.
New York, NY: Wiley-Interscience.

\bibitem{EngholmHAI}
Larsson Engholm, D. (2026).
From right triangles to Fibonacci-type spirals via the Hypotenuse--Axis Intercept (HAI): the characteristic identity, M\"obius maps, Lucas traces, and the golden 2:1 triangle.
Zenodo, Version 4.0.
\href{https://doi.org/10.5281/zenodo.18486177}{doi.org/10.5281/zenodo.18486177}

\bibitem{MillerVideo}
Miller, S. J. (2022).
The Fibonacci sequence and math outreach.
Video presentation, 20th International Fibonacci Conference, Sarajevo, July 29, 2022.
\href{https://youtu.be/vZOttB4ksXw}{youtu.be/vZOttB4ksXw}

\end{thebibliography}
\end{document}